\theoremstyle{plain}
\newtheorem{theorem}{Theorem}[section]
\newtheorem{corollary}[theorem]{Corollary}
\newtheorem{lemma}[theorem]{Lemma}
\newtheorem*{theorem*}{Theorem}
\theoremstyle{remark}
\theoremstyle{definition}
\newtheorem{definition}[theorem]{Definition}
\numberwithin{equation}{theorem}
\newcommand{\ZZ}{\mathbb{Z}}
\newcommand{\m}{\mathfrak{m}}
\newcommand{\p}{\mathfrak{p}}
\newcommand{\q}{\mathfrak{q}}
\newcommand{\Ass}{\operatorname{Ass}}
\newcommand{\Spec}{\operatorname{Spec}}
\newcommand{\Max}{\operatorname{Max}}
\newcommand{\Supp}{\operatorname{Supp}}
\newcommand{\Ann}{\operatorname{Ann}}
\newcommand{\Hom}{\operatorname{Hom}}
\newcommand{\inj}{\operatorname{inj}}
\newcommand{\rank}{\operatorname{rank}}
\newcommand{\cog}{\operatorname{cog}}
\newcommand{\Soc}{\operatorname{Soc}}
\renewcommand{\ge}{\geqslant} \renewcommand{\le}{\leqslant}
\begin{document}

\title[Injective capacity and cogeneration]{Injective capacity and cogeneration}
\author{Robin Baidya and Yongwei Yao}
\email{rbaidya@utk.edu}
\address{Department of Mathematics, The University of Tennessee, Knoxville, Tennessee 37996}
\email{yyao@gsu.edu}
\address{Department of Mathematics and Statistics, Georgia State University, Atlanta, Georgia 30303}

\date{\today}
\keywords{Basic element, cogeneration, cogenerator, general position, graded, injective} 
\subjclass[2020]{Primary 13E05; Secondary 13A02, 13B30, 13C05}
\begin{abstract}
Let $M$ and $N$ be modules over a commutative ring~$R$ with $N$ Noetherian.  We define the \textit{injective capacity of $M$ with respect to $N$ over~$R$} to be the supremum of the values $t$ for which $N^{\oplus t}$ embeds into~$M$.  In a dual fashion, we deem the \textit{number of cogenerators of $N$ with respect to $M$ over~$R$} to be the infimum of the numbers $t$ for which $N$ embeds into~$M^{\oplus t}$.  We demonstrate that the global injective capacity is the infimum of its local analogues and that the global number of cogenerators is the supremum of the corresponding local invariants.  We also prove enhanced versions of these statements and consider the graded case.
\end{abstract}
\commby{}
\maketitle

\section{Introduction}\label{sec:intro}

Our goal in this paper is to prove various refinements of the following assertion:

\begin{theorem}[Main Theorem]\label{theorem:intro}
If $M$ and $N$ are modules over a commutative ring $R$ with $N$ Noetherian, then the existence of an injection in $\Hom_R(N,M)$ is a local property. 
\end{theorem}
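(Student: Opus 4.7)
The ``only if'' direction is immediate: localization is exact, so any injection $\varphi \in \Hom_R(N,M)$ yields an injection $\varphi_\p \in \Hom_{R_\p}(N_\p, M_\p)$ for every prime $\p$. The substantive content lies in the converse, where local injections must be glued into a global one.

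For the ``if'' direction, my plan is as follows. Since $N$ is Noetherian it is finitely presented, so $\Hom_R(N, -)$ commutes with localization: $\Hom_R(N, M)_\p \cong \Hom_{R_\p}(N_\p, M_\p)$. Moreover $\Ass_R(N)$ is finite, say $\Ass_R(N) = \{\p_1, \ldots, \p_n\}$. For each $i$ I choose a maximal $\m_i \supseteq \p_i$; the hypothesis provides an injection $N_{\m_i} \hookrightarrow M_{\m_i}$, which further localizes to an injection $N_{\p_i} \hookrightarrow M_{\p_i}$. Clearing denominators (and noting that scaling by a unit in $R_{\p_i}$ preserves injectivity) I lift each such injection to a global map $\varphi_i \in \Hom_R(N, M)$ whose localization at $\p_i$ is injective.

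The key test for global injectivity is the observation that $\varphi \in \Hom_R(N, M)$ is injective if and only if, for every $\p \in \Ass_R(N)$, the restriction of $\varphi_\p$ to the socle $V_\p := \{x \in N_\p : \p R_\p \cdot x = 0\}$ is injective. Indeed, a nonzero $\ker \varphi \subseteq N$ would admit an associated prime $\p \in \Ass_R(N)$, producing a nonzero element of $V_\p$ killed by $\varphi_\p$. Since $N$ is finitely generated, each $V_\p$ is a finite-dimensional $\kappa(\p)$-vector space, and the problem reduces to finding one $\varphi$ whose localization is injective on $V_{\p_i}$ simultaneously for every $i$.

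To manufacture such a $\varphi$, I would search among combinations $\sum_{i=1}^n c_i \varphi_i$ with $c_i \in R$. At each $\p_j$ the condition depends only on the image of $(c_1, \ldots, c_n)$ in $\kappa(\p_j)^n$, and the injectivity of $(\varphi_j)_{\p_j}$ on $V_{\p_j}$ guarantees that the set of workable residues at $\p_j$ is a nonempty Zariski-open subset of $\kappa(\p_j)^n$. The main obstacle is choosing one tuple in $R^n$ whose residue at each $\p_j$ lies in the corresponding open set; this is a prime-avoidance problem that becomes delicate when the residue fields are small (finite, for instance). I expect to resolve it either by a Davis-style enhanced prime avoidance, inducting on $n$ and selecting each $c_i$ from carefully arranged unions of cosets, or by an auxiliary polynomial-extension trick that enlarges the residue fields before specializing. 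Either way, this simultaneous-general-position step is the heart of the argument.
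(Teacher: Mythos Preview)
Your overall strategy matches the paper's: reduce to the finitely many primes in $\Ass_R(N)$, test injectivity on the socles $V_\p$ (this is exactly the paper's Lemma~\ref{lemma:reduction}), and then seek a single combination $\sum c_i\varphi_i$ that is injective on every $V_{\p_j}$ simultaneously. Where you and the paper diverge is precisely at the step you flag as the heart of the argument.

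Two remarks. First, a minor technical slip: ``$N$ Noetherian'' does not imply ``$N$ finitely presented'' over a general $R$ (take $N=R/I$ with $I$ a non-finitely-generated maximal ideal). The paper sidesteps this entirely by phrasing locality in terms of $F_\p$ for $F=\Hom_R(N,M)$ rather than the full $\Hom_{R_\p}(N_\p,M_\p)$; the needed $\varphi_i$ are then obtained simply by clearing a denominator in $F_{\p_i}$, and one never needs $\Hom$ to commute with localization.

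Second, and more substantively, neither of your two proposed resolutions of the simultaneous-general-position problem quite closes. The polynomial-extension idea does not: passing to $R[t]$ enlarges residue fields, but there is no evident way to specialize an injection $N[t]\hookrightarrow M[t]$ back to one over $R$ while preserving injectivity. The Davis-style idea is closer in spirit, but the paper's actual device is sharper and worth knowing. One splits $\Ass_R(N)$ into maximal and non-maximal primes. For the maximal ones a Chinese-Remainder combination $\sum_\m s(\m)d(\m)$, with $s(\m)$ a unit at $\m$ and in every other maximal associated prime, handles all of them at once regardless of residue-field size. For a non-maximal $\q$, the domain $R/\q$ is automatically infinite, hence so is the coset set $(J+\q)/\q$ where $J$ is the intersection of the primes already treated; one then adjusts the running map by $\sum c_jg_j$ with $c_j\in J$, choosing the $c_j$ via a determinant argument (the paper's Lemma~\ref{lemma:multi-rank-block}) that needs only more than $\dim_{\kappa(\q)}V_\q$ available values. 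Your Zariski-open picture is correct, but you are choosing $c_i\in R$, so the relevant points lie in $(R/\p_j)^n$, not $\kappa(\p_j)^n$; what makes the open sets meet this subset is exactly the maximal/non-maximal dichotomy above, which your outline does not yet contain.
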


There are two reasons why we find this result worthy of report.  One is the spectacular failure of the analogous statement for surjections, and the other is the voluminous response to this failure that has, at least to our knowledge, excluded consideration of the problem that we present here.  We open with a survey of the related literature on epimorphisms and proceed to illustrate that, inasmuch as a comparison can be made, the monic case affords stronger conclusions with weaker hypotheses.

Theorems on factor modules relevant to our discussion come in two types.  Findings of the first type deliver values $t$ for which $\Hom_R(M,N^{\oplus t})$ harbors a surjection, where $M$ and $N$ are modules over a ring~$R$.  For example, in~\cite{Bai1}, the first author shows that, if $R$ is a $d$-dimensional commutative Noetherian ring and $M$ and $N$ are finitely generated $R$-modules such that $N^{\oplus (t+d)}$ is locally a factor of $M$, then $N^{\oplus t}$ is globally a factor of~$M$~\cite[Theorem~1.1(1)]{Bai1}.  The same statement holds if we replace \textit{factor} with \textit{direct summand}~\cite[Theorem~1.1(2)]{Bai1}.  Specializations of these results are due to several authors:  Serre~\cite[\textit{Th\'eor\`eme~1}]{Ser} addresses the setting in which $M$ is projective and $N=R$; Bass~\cite[Theorem~8.2]{Bas} removes Serre's projective condition assuming that $t=1$; and De~Stefani--Polstra--Yao~\cite[Theorem~3.13]{DSPY} extends Bass by allowing $t$ to be an arbitrary positive integer.  Additional variations on Serre's Splitting Theorem, all of which involve some notion of dimension, appear in Coquand--Lombardi--Quitt\'e~\cite[Corollary~3.2]{CLQ}, Heitmann~\cite[Corollary~2.6]{Heit}, and Stafford~\cite[Proposition~5.5]{Sta}.  These results are sharp to the extent that the coordinate ring of the real $d$-sphere admits an indecomposable rank-$d$ projective module for every positive even integer~$d$~\cite[Theorem~3]{Swa2}.  So, the existence of an epimorphism between two fixed modules, though not a local property, does follow from, and generally requires, certain dimensional restrictions, given that some finiteness conditions are in place.

This fact naturally affects module generation, the subject of the second type of result that concerns us here.  For modules $M$ and $N$ over a ring~$R$, we say that \textit{$u$ copies of $M$ generate $N$} if $\Hom_R(M^{\oplus u},N)$ contains an onto map.  The local existence of such integers $u$ does not ensure global existence:  If $R=M$ is a direct product of infinitely many fields and $N$ is the direct sum of the local rings of~$R$, then $N$ is finitely generated locally but not globally.  In order to avoid this sort of situation, theorists traditionally assume global existence when applying local data to the issue of module generation.  Exploiting this concession, the first author certifies in a forthcoming paper that, if $M$ and $N$ are finitely generated modules over a $d$-dimensional commutative Noetherian ring $R$ such that $t-d$ copies of $M$ generate~$N$ locally, then $t$ copies of $M$ generate~$N$ globally~\cite{Bai4}.  In the case that $M=R$, the preceding assertion is a theorem of Forster~\cite[{\textit{Satz~2}}]{For}.  A refinement of Forster achieved by Swan~\cite[Theorem~2]{Swa} is the celebrated theorem now bearing the names of both men.  Further elaborations on Forster's Theorem are due to Coutinho~\cite[Corollary~8.5]{Cou2}~\cite[Theorem~5.4]{Cou}, Eisenbud--Evans~\cite[Theorem~B]{EE}, Lyubeznik~\cite[Theorem~2]{Lyu}, and Warfield~\cite[Theorem~2]{War2}.  Despite the many ways that Forster's Theorem has been extended, Forster's local condition cannot be improved in general:  For all nonnegative integers $d$ and $t$ with $d<t$, there is a $d$-dimensional commutative ring $R$ admitting a projective module of rank $t-d$ that is not a factor of~$R^{\oplus (t-1)}$~\cite[Theorem~4]{Swa2}.  Thus, once again, we are confronted with a question on epimorphisms that resists reduction to the local realm and exhibits strong ties to dimension, even with the provision of several finiteness hypotheses.

In contrast, our solution to the problem of embeddability boils down to the local case and obviates dimensional considerations.  Moreover, the Noetherian condition on the source of our maps, which constitutes our only finiteness assumption, cannot be removed:  Adapting a prior example, we observe that, if $R=N$ is a direct product of infinitely many fields and $M$ is the direct sum of the local rings of~$R$, then $N$ embeds into~$M$ locally but not globally.  Assuming, on the other hand, that $M$ and $N$ are modules over a commutative ring $R$ with $N$ Noetherian, we may marshal the fact (Lemma~\ref{lemma:reduction} below) that a map $h\in\Hom_R(N,M)$ is monic if and only if $h_{\p}$, when restricted to the socle of~$N_{\p}$, is monic for every $\p$ in the finite set $\Ass_R(N)$.  This characterization of injectivity allows us to use a ``general position" argument in the style of~\cite[Sections~6 and~7]{Bai1}, which would suffice to prove our main theorem.  To complement the existing literature on direct summands of factor modules and on factor modules of direct sums, we could then replace $\Hom_R(N,M)$ in our main theorem with $\Hom_R(N^{\oplus t},M)$ or $\Hom_R(N,M^{\oplus t})$ for every nonnegative integer~$t$.

Instead, we take the following route:  We begin by laying down the technical foundations for our argument in Section~\ref{sec:tech}, introducing two invariants of the triple $(M,N,R)$:  the \textit{injective capacity of $M$ with respect to $N$ over~$R$} and the \textit{number of cogenerators of $N$ with respect to $M$ over~$R$}.  In Section~\ref{sec:root-rank}, we establish results on roots of polynomials and ranks of matrices to which we appeal later in the paper.  Section~\ref{sec:main} offers enhancements of our main theorem using injective capacities and numbers of cogenerators.  Our main theorem follows as a corollary of these statements.  Finally, in Section~\ref{sec:inj-graded}, we initiate an investigation on embeddings of graded modules.

\section{Conventions and definitions}\label{sec:tech}

In this section, we flesh out our conventions for the rest of the paper and cover definitions that will streamline our discussion.

Throughout this paper, the letter $R$ refers to a commutative ring with unity.  The set of all maximal ideals of~$R$, called the \textit{maximal spectrum of~$R$}, is denoted $\Max(R)$.  The set of all prime ideals of~$R$, referred to as the \textit{prime spectrum of~$R$}, is written as $\Spec(R)$.  For every $\p\in\Spec(R)$, the symbol $\kappa(\p)$ represents the \textit{residue field of $R$ at $\p$} or, in other words, the ring $R_{\p}/\p_{\p}$.  

Every $R$-module in this paper is standard.  The letter $M$ refers to an arbitrary $R$-module, and $N$ stands for a Noetherian $R$-module.   The \textit{annihilator of $N$ in $R$}, written as $\Ann_R(N)$, is the largest ideal $I$ of $R$ satisfying $IN=0$.  We signify with $\Supp_R(N)$ the set of all $\p\in\Spec(R)$ such that $N_{\p}\neq 0$, and we call this set the \textit{support of $N$ over $R$}.  For every $\p\in\Spec(R)$, the \textit{socle of $N_{\p}$ over $R_{\p}$}, denoted $\Soc_{R_{\p}}(N_{\p})$, is the $\kappa(\p)$-module consisting of all elements of $N_{\p}$ annihilated by~$\p_{\p}$.  An \textit{associated prime of $N$ in $R$} is a prime $\p$ of $R$ such that $\Soc_{R_{\p}}(N_{\p})\neq 0$.  The set $\Ass_R(N)$ is the finite set of all associated primes of $N$ in $R$.  For every $R$-module~$L$ and nonnegative integer~$t$, the notation $L^{\oplus t}$ serves as shorthand for the direct sum of $t$ copies of~$L$, with $L^{\oplus 0}$ signifying the zero $R$-module.  $\Hom_R(N,M)$ refers to the $R$-module of all $R$-linear maps from $N$ to~$M$.  For every nonnegative integer~$t$, we view a member of $\Hom_R(N^{\oplus t},M)$ as a row $(h_1,\ldots,h_t)$, and we view a member of $\Hom_R(N,M^{\oplus t})$ as a column
\[
\begin{pmatrix}
h_1 \\
\vdots \\
h_t \\
\end{pmatrix},
\]
where $h_1,\ldots,h_t\in\Hom_R(N,M)$.  Often, we write $(h_1,\ldots,h_t)^{\top}$ to denote the transpose of a row.  For an $R$-submodule $F$ of $\Hom_R(N,M)$, context will determine the meaning of the symbol $F^{\oplus t}$:  If we write $F^{\oplus t}\subseteq\Hom_R(N^{\oplus t},M)$, then $F^{\oplus t}$ refers to the $R$-module of all rows $(f_1,\ldots,f_t)$, where $f_1,\ldots,f_t\in F$; if we write $F^{\oplus t}\subseteq\Hom_R(N,M^{\oplus t})$, then $F^{\oplus t}$ represents the $R$-module of all columns of length $t$ with entries in $F$.

We now expand on two terms first mentioned in our introduction:

\begin{definition}\label{definition:inj}
	 We let $\inj_R^F(M,N)$ denote the supremum of the nonnegative integers $t$ for which an injection exists in $F^{\oplus t}\subseteq\Hom_R(N^{\oplus t},M)$.  We call $\inj^F_R(M,N)$ the \textit{global injective capacity of $M$ with respect to $N$ over~$R$ when restricted to $F$}.  For every $\p\in\Spec(R)$, we call $\inj^{F_{\p}}_{R_{\p}}(M_{\p},N_{\p})$ the \textit{local injective capacity of $M$ with respect to $N$ over $R$ when restricted to $F$ at~$\p$}.  We omit the superscript $F$ and the phrase \textit{when restricted to $F$} in the case that $F=\Hom_R(N,M)$.  We set $\sup(\varnothing)=0$ and $\sup(\{0,1,2,\ldots\})=\infty$.
\end{definition}

\begin{definition}\label{definition:cog}
	The \textit{global number of cogenerators of $N$ with respect to $M$ over~$R$ when restricted to $F$}, denoted $\cog_R^F(N,M)$, refers to the infimum of the nonnegative integers $t$ such that there is an injection in $F^{\oplus t}\subseteq\Hom_R(N,M^{\oplus t})$.  For every $\p\in\Spec(R)$, the \textit{local number of cogenerators of $N$ with respect to $M$ over $R$ when restricted to $F$ at~$\p$} is the number $\cog^{F_{\p}}_{R_{\p}}(N_{\p},M_{\p})$.  If $F=\Hom_R(N,M)$, we withhold the superscript $F$ and all references to restriction.   We set $\inf(\varnothing)=\infty$ and $\inf(\{0,1,2,\ldots\})=0$.
\end{definition}

To recapitulate our main conventions, $R$ denotes a commutative ring; $M$ refers to an arbitrary $R$-module; and $N$ signifies a Noetherian $R$-module.

\section{Roots and ranks}\label{sec:root-rank}

In preparation for our ``general position" arguments, we prove three lemmas below that may attract independent interest.  Lemma~\ref{lemma:multi-poly-roots} describes a way to ensure, for example, that a Cartesian product of finite subsets of a field is not contained in (a fixed embedding of) a given affine algebraic set over that field.  Building on this result, Lemma~\ref{lemma:multi-rank-block} specifies an instance in which two matrices, one of which has large rank, can be combined to produce a matrix that still has large rank.  Lemma~\ref{lemma:multi-rank} is a variant of Lemma~\ref{lemma:multi-rank-block} that we use in the last section of our paper when discussing graded modules.

\begin{lemma}\label{lemma:multi-poly-roots}
Let $k_1,\ldots,k_s$ be fields and $x_1,\ldots,x_t$ indeterminates.  For every $i\in\{1,\ldots,s\}$, let $f_i$ be a nonzero polynomial in $k_i[x_1,\dotsc, x_t]$ such that every monomial in its support divides~$x_1^{n_{i1}}\dotsb x_t^{n_{it}}$.  For every $j\in\{1,\ldots,t\}$, let $C_j$ be a set with $|C_j| > \sum_{i=1}^s n_{ij}$, and suppose that there are embeddings $\phi_{j1},\ldots,\phi_{js}$  of $C_j$ into $k_1,\ldots,k_s$, respectively.  Then there exists $(c_1, \dotsc, c_t) \in C_1 \times \dotsb \times C_t$ such that $f_i(\phi_{1i}(c_1),\dotsc,\phi_{ti}(c_t))\neq 0$ for every $i\in\{1,\ldots,s\}$.
\end{lemma}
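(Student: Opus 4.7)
My plan is to induct on the number of variables $t$. For the base case $t = 1$, each $f_i \in k_i[x_1]$ is a nonzero polynomial of degree at most $n_{i1}$, hence has at most $n_{i1}$ roots in $k_i$. Because $\phi_{1i}$ is injective, at most $n_{i1}$ elements of $C_1$ are sent by $\phi_{1i}$ into the root set of $f_i$. Therefore the set of ``bad'' choices (those $c \in C_1$ for which $f_i(\phi_{1i}(c)) = 0$ for some $i$) has cardinality at most $\sum_{i=1}^{s} n_{i1} < |C_1|$, so any $c_1 \in C_1$ outside this bad set works.

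For the inductive step, my strategy is to isolate $x_1$ and reduce to the $(t-1)$-variable case. Viewing $f_i$ as a polynomial in $x_1$ with coefficients in $k_i[x_2, \ldots, x_t]$, I would write $f_i = \sum_{\mu} p_{i,\mu}(x_1)\,\mu$, where $\mu$ ranges over monomials in $x_2, \ldots, x_t$ dividing $x_2^{n_{i2}} \cdots x_t^{n_{it}}$ and each $p_{i,\mu} \in k_i[x_1]$ has degree at most $n_{i1}$. Since $f_i \neq 0$, I can fix a ``witnessing'' monomial $\mu_i$ with $p_{i,\mu_i} \neq 0$. Applying the base-case count to the single-variable polynomials $p_{1, \mu_1}, \ldots, p_{s, \mu_s}$ then yields some $c_1 \in C_1$ with $p_{i,\mu_i}(\phi_{1i}(c_1)) \neq 0$ for every $i$.

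Setting $\tilde{f}_i(x_2, \ldots, x_t) := f_i(\phi_{1i}(c_1), x_2, \ldots, x_t)$, the coefficient of $\mu_i$ in $\tilde{f}_i$ equals $p_{i,\mu_i}(\phi_{1i}(c_1)) \neq 0$, so $\tilde{f}_i$ is a nonzero element of $k_i[x_2, \ldots, x_t]$, and every monomial in its support still divides $x_2^{n_{i2}} \cdots x_t^{n_{it}}$. The inductive hypothesis, applied to the $\tilde{f}_i$'s, the sets $C_2, \ldots, C_t$, and the embeddings $\phi_{ji}$ with $j \geq 2$, then produces $(c_2, \ldots, c_t)$ completing the desired tuple. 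The one subtle point, and the main (minor) obstacle, is that different $i$'s may require different witnessing monomials $\mu_i$; however, each $\mu_i$ contributes only $n_{i1}$ to the total count of bad $c_1$'s, so the hypothesis $|C_1| > \sum_i n_{i1}$ is precisely what is needed to accommodate all of them simultaneously.
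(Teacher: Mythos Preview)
Your proof is correct and follows exactly the approach the paper indicates: the paper's proof is the single sentence ``Since $k[x_1,\dotsc,x_t]=k[x_1,\dotsc,x_{t-1}][x_t]$ for every field~$k$, induction on~$t$ suffices,'' and you have spelled out precisely this induction (isolating one variable, using a root count to pick a good value for it, and then applying the inductive hypothesis to the remaining variables).
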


\begin{proof}
Since $k[x_1,\dotsc,x_t]=k[x_1,\dotsc,x_{t-1}][x_t]$ for every field~$k$, induction on~$t$ suffices.
\end{proof}

\begin{lemma}\label{lemma:multi-rank-block}
For all $i\in\{1,\ldots,s\}$ and $j\in\{1,\ldots,t\}$, let $A_i$ and $B_i := (B_{i1}\hspace{1mm}\vert\dotsb\vert\hspace{1mm} B_{it})$ be $m_i \times n_i$ matrices with entries in a field $k_i$; suppose that $\rank(B_i) \ge r_i$ and that $B_{ij}$ has size $m_i\times n_{ij}$; and let $C_j$ be a set of size exceeding $\sum_{i=1}^s \min\{r_i,n_{ij}\}$ that can be embedded into $k_1,\ldots,k_s$ via maps $\phi_{j1},\ldots,\phi_{js}$, respectively.  Then there exists $(c_1, \dotsc, c_t)\in C_1 \times \dotsb \times C_t$ such that $\rank(A_i + (\phi_{1i}(c_1)B_{i1}\hspace{1mm}\vert\dotsb\vert\hspace{1mm} \phi_{ti}(c_t)B_{it})) \ge r_i$ for every $i\in\{1,\ldots,s\}$.  An analogous statement holds for the transposes of $A_1,B_1,\ldots,A_s,B_s$.
\end{lemma}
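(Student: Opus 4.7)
The plan is to use Lemma~\ref{lemma:multi-poly-roots} with polynomials constructed from carefully chosen minors of the parametric matrices
\[
M_i(x_1,\ldots,x_t) := A_i + (x_1 B_{i1}\hspace{1mm}\vert\dotsb\vert\hspace{1mm} x_t B_{it})\in k_i[x_1,\ldots,x_t]^{m_i\times n_i}.
\]
For each $i$, since $\rank(B_i)\ge r_i$, we may select a row set $I_i$ and column set $J_i$, both of cardinality $r_i$, such that the corresponding $r_i\times r_i$ minor of $B_i$ is nonzero. Let $k_{ij}$ denote the number of columns of $J_i$ that lie in the $j$-th block $B_{ij}$, so that $\sum_{j=1}^t k_{ij}=r_i$ and $k_{ij}\le n_{ij}$, which together give $k_{ij}\le\min\{r_i,n_{ij}\}$.

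Next I would define $f_i(x_1,\ldots,x_t)\in k_i[x_1,\ldots,x_t]$ to be the $r_i\times r_i$ minor of $M_i$ indexed by the rows $I_i$ and columns $J_i$. Since every column of $M_i$ in block $j$ has the form (column of $A_i$) $+$ $x_j\cdot$(column of $B_{ij}$), Leibniz-expanding the determinant shows that each monomial appearing in $f_i$ has $x_j$-degree at most $k_{ij}$; equivalently, every monomial in the support of $f_i$ divides $x_1^{k_{i1}}\dotsb x_t^{k_{it}}$. Moreover, the coefficient of the monomial $x_1^{k_{i1}}\dotsb x_t^{k_{it}}$ is obtained by selecting the $B_{ij}$-contribution from every column of $J_i$ in block $j$, and this coefficient equals the chosen nonzero minor of $B_i$. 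Hence $f_i$ is a nonzero polynomial.

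Because $k_{ij}\le\min\{r_i,n_{ij}\}$, the hypothesis gives
\[
|C_j|>\sum_{i=1}^s\min\{r_i,n_{ij}\}\ge\sum_{i=1}^s k_{ij}.
\]
Applying Lemma~\ref{lemma:multi-poly-roots} with these exponents $k_{ij}$ in place of $n_{ij}$ produces a tuple $(c_1,\ldots,c_t)\in C_1\times\dotsb\times C_t$ with $f_i(\phi_{1i}(c_1),\ldots,\phi_{ti}(c_t))\ne 0$ for every $i$. This nonvanishing is precisely the statement that the $r_i\times r_i$ submatrix of $A_i+(\phi_{1i}(c_1)B_{i1}\hspace{1mm}\vert\dotsb\vert\hspace{1mm}\phi_{ti}(c_t)B_{it})$ indexed by $I_i$ and $J_i$ has nonzero determinant, so that the full matrix has rank at least $r_i$. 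The transpose version follows by applying the same argument to $A_i^\top$ and $B_i^\top$ (with the block decomposition of $B_i^\top$ now coming from its rows), since the rank is invariant under transposition.

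The main obstacle is the bookkeeping in the second paragraph: one has to identify a single $r_i\times r_i$ minor of the parametric matrix $M_i$ whose ``highest'' monomial (in the product-order sense that every other monomial divides it) has a guaranteed nonzero coefficient, and then verify that the exponent vector of that highest monomial is bounded by $\min\{r_i,n_{ij}\}$ in each coordinate so that the cardinality hypothesis on $C_j$ is exactly what is needed to feed into Lemma~\ref{lemma:multi-poly-roots}.
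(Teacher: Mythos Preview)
Your proof is correct and follows essentially the same approach as the paper: the paper performs the reduction to an $r_i\times r_i$ submatrix as an explicit ``without loss of generality'' step (so that the original $n_{ij}$ are replaced by your $k_{ij}$), after which its polynomial $f_i=\det(A_i+(x_1B_{i1}\,\vert\dotsb\vert\,x_tB_{it}))$ is exactly your chosen minor, and Lemma~\ref{lemma:multi-poly-roots} is invoked in the same way. Your bookkeeping with the index sets $I_i,J_i$ and the explicit verification that $k_{ij}\le\min\{r_i,n_{ij}\}$ just makes visible what the paper's reduction leaves implicit.
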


\begin{proof}
For every $i\in\{1,\ldots,s\}$, we may assume, without loss of generality, that $A_i$ and $B_i$ are both $r_i \times r_i$ matrices and that $\det(B_i) \neq 0$. (So, now, $r_i = n_{i1}+ \dotsb + n_{it}$.)  For every $i\in\{1,\ldots,s\}$, let $f_i:=\det(A_i+(x_1B_{i1}\hspace{1mm}\vert\dotsb\vert\hspace{1mm} x_tB_{it})) = \det(A_i) + \dotsb + \det(B_i)(x_1^{n_{i1}}\dotsm x_t^{n_{it}})$ so that $f_i$ is a nonzero polynomial in $k_i[x_1,\dotsc, x_t]$ with every monomial in its support a factor of $x_1^{n_{i1}}\dotsm x_t^{n_{it}}$.  Lemma~\ref{lemma:multi-poly-roots} now finishes the proof of the first claim of the lemma.  The second claim can be verified in a similar manner.
\end{proof}

\begin{lemma}\label{lemma:multi-rank}
For all $i\in\{1,\ldots,s\}$ and $j\in\{1,\ldots,t\}$, let $A_i$ and $B_{ij}$ be $m_i \times n_i$ matrices with entries in a field $k_i$.  Suppose that, for every $i\in\{1,\ldots,s\}$, there exists $j_i\in\{1,\ldots,t\}$ such that $\rank(B_{i,j_i})\geqslant r_i$.  For every $j\in\{1,\ldots,t\}$, let $C_j$ be a set containing at least $1+\sum_{i=1}^s r_i$ elements but at most $\min\{|k_1|,\ldots,|k_s|\}$ elements so that there exist injections $\phi_{j1},\ldots,\phi_{js}$ from $C_j$ to $k_1,\ldots,k_s$, respectively.  Then there exists $(c_1, \dotsc, c_t)\in C_1 \times \dotsb \times C_t$ such that $\rank(A_i + \phi_{1i}(c_1)B_{i1}+\cdots + \phi_{ti}(c_t)B_{it}) \ge r_i$ for every $i\in\{1,\ldots,s\}$.
\end{lemma}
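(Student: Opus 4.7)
The plan is to mimic the proof of Lemma~\ref{lemma:multi-rank-block}, reducing to an application of Lemma~\ref{lemma:multi-poly-roots} via well-chosen determinantal polynomials. The key observation is that, since the hypothesis guarantees a \emph{single} block $B_{i,j_i}$ of large rank for each $i$, we can extract a nonvanishing $r_i \times r_i$ minor of $B_{i,j_i}$ and promote it to a leading coefficient of a determinantal polynomial.

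First, for every $i\in\{1,\ldots,s\}$, I would use the hypothesis $\rank(B_{i,j_i})\geq r_i$ to choose a row-index set $I_i\subseteq\{1,\ldots,m_i\}$ and a column-index set $J_i\subseteq\{1,\ldots,n_i\}$, each of cardinality $r_i$, such that the submatrix $(B_{i,j_i})_{I_i,J_i}$ has nonzero determinant. Then I would introduce indeterminates $x_1,\ldots,x_t$ and define
\[
f_i(x_1,\ldots,x_t) := \det\bigl((A_i + x_1 B_{i1} + \cdots + x_t B_{it})_{I_i,J_i}\bigr) \in k_i[x_1,\ldots,x_t].
\]

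Second, I would check two features of each $f_i$: (a) because the entries of the matrix inside the determinant are linear in every $x_j$, expanding $f_i$ produces a polynomial whose degree in each $x_j$ is at most $r_i$, so every monomial in the support of $f_i$ divides $x_1^{r_i}\cdots x_t^{r_i}$; (b) the coefficient of the monomial $x_{j_i}^{r_i}$ in $f_i$ equals $\det((B_{i,j_i})_{I_i,J_i})$, which is nonzero by construction, so $f_i\neq 0$ in $k_i[x_1,\ldots,x_t]$.

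Third, I would apply Lemma~\ref{lemma:multi-poly-roots} with $n_{ij}=r_i$ for all $i,j$. The required bound $|C_j|>\sum_{i=1}^{s} n_{ij}=\sum_{i=1}^{s} r_i$ is exactly the hypothesis $|C_j|\geq 1+\sum_{i=1}^{s} r_i$, and the injections $\phi_{j\ell}$ provided in our setup play the role of the embeddings required by Lemma~\ref{lemma:multi-poly-roots}. The lemma then delivers $(c_1,\ldots,c_t)\in C_1\times\cdots\times C_t$ with $f_i(\phi_{1i}(c_1),\ldots,\phi_{ti}(c_t))\neq 0$ for every~$i$. Since $f_i$ evaluated at these scalars is precisely the $(I_i,J_i)$-minor of $A_i+\phi_{1i}(c_1)B_{i1}+\cdots+\phi_{ti}(c_t)B_{it}$, its nonvanishing forces $\rank(A_i+\phi_{1i}(c_1)B_{i1}+\cdots+\phi_{ti}(c_t)B_{it})\geq r_i$, finishing the proof.

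I expect no serious obstacle: the argument is essentially bookkeeping on top of Lemma~\ref{lemma:multi-poly-roots}. The only subtle point is recognizing that choosing $I_i,J_i$ via the single large-rank block $B_{i,j_i}$ is exactly what is needed to guarantee $f_i\neq 0$, and that the uniform bound $\deg_{x_j} f_i\leq r_i$ then matches the cardinality hypothesis on $C_j$ perfectly.
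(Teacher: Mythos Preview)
Your proposal is correct and follows essentially the same approach as the paper's proof: the paper ``reduces to the case $m_i=n_i=r_i$ with $\det(B_{i,j_i})\neq 0$,'' which amounts exactly to your selection of index sets $I_i,J_i$, and then defines the same determinantal polynomial $f_i$ before invoking Lemma~\ref{lemma:multi-poly-roots} with $n_{ij}=r_i$. Your write-up is slightly more explicit about the submatrix bookkeeping, but the argument is the same.
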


\begin{proof}
For every $i\in\{1,\ldots,s\}$, we reduce to the case in which $m_i=n_i=r_i$ and $\det(B_{i,j_i})\neq 0$ and define $f_i:=\det(A_i + x_1B_{i1}+\cdots + x_tB_{it}) = \det(A_i) + \dotsb + \det(B_{i,j_i})x_{j_i}^{r_i} + \dotsb$, a nonzero polynomial in $k_i[x_1,\dotsc, x_t]$ with every monomial in its support dividing $x_1^{r_i}\dotsm x_t^{r_i}$.  Applying Lemma~\ref{lemma:multi-poly-roots} is all that remains to be done.
\end{proof}

\section{Main results}\label{sec:main}

Our purpose in this section is to achieve several stronger versions of our main theorem.  Our primary message is that neither all of $\Hom_R(N,M)$ nor the entirety of $\Supp_R(N)$ necessarily needs to be studied when estimating $\inj_R(M,N)$ or $\cog_R(N,M)$.  Indeed, it suffices to work with an $R$-submodule of $\Hom_R(N,M)$ admitting large local injective capacities or small local numbers of cogenerators on the finite set $\Ass_R(N)$.

Each observation in this section also contributes something unique to our discussion.  Theorem~\ref{theorem:inj} produces a finite list of maps in $\Hom_R(N,M)$ whose order is linked to local injective capacities on $\Ass_R(N)$, and Theorem~\ref{theorem:cog} serves as its dual for local numbers of cogenerators.  Corollary~\ref{corollary:inj} delineates a local criterion for an $R$-submodule $L$ of~$M$ to intersect trivially with a isomorphic copy of~$N^{\oplus t}$ in~$M$, and Corollary~\ref{corollary:cog} replaces~$L$,~$M$, and $N^{\oplus t}$ in the last statement with~$L^{\oplus t}$,~$M^{\oplus t}$, and~$N$, respectively.  This section ends with Theorem~\ref{theorem:summary}, which summarizes our findings:  A global injective capacity is the infimum of its local analogues on~$\Ass_R(N)$; a global number of cogenerators is the supremum of the corresponding local invariants on~$\Ass_R(N)$; and an $R$-submodule of $\Hom_R(N,M)$ contains an injection if and only if its localizations at the associated primes of $N$ contain injections.

\begin{theorem}\label{theorem:inj}
Let $M$ and $N$ be modules over a commutative ring~$R$ with $N$ nonzero Noetherian, and let $F$ be an $R$-submodule of $\Hom_R(N,M)$.  For every $\p\in\Ass_R(N)$, let $t(\p)$ be a positive integer, and suppose that $t(\p)\leqslant\inj^{F_{\p}}_{R_{\p}}(M_{\p},N_{\p})$.  Next, for every multiplicatively closed subset $S$ of~$R$ avoiding~$\Ann_R(N)$, define $u(S):=\min\{t(\p):\p\in\Ass_R(N),\p\cap S=\varnothing\}$.  Also, let $v:=\max\{t(\p):\p\in\Ass_R(N)\}$.  Then there exist $h_1,\ldots,h_v\in F$ such that the map $S^{-1}(h_1,\ldots,h_{u(S)})$ is injective for every multiplicatively closed subset $S$ of~$R$ avoiding~$\Ann_R(N)$.  Hence, letting $u:=u(\{1\})$, we find that $(h_1,\ldots,h_u)$ is injective.	 
\end{theorem}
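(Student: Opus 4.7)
The plan is to reduce $S^{-1}(h_1,\ldots,h_{u(S)})$ being injective to a finite collection of socle-level conditions via the associated-prime criterion (Lemma~\ref{lemma:reduction}), and then to construct the $h_j$'s simultaneously via a general-position argument based on Lemma~\ref{lemma:multi-rank-block}.

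For the reduction: for a multiplicatively closed $S$ avoiding $\Ann_R(N)$, the associated primes of $S^{-1}N$ over $S^{-1}R$ correspond to the $\p \in \Ass_R(N)$ with $\p \cap S = \varnothing$, so Lemma~\ref{lemma:reduction} turns injectivity of $S^{-1}(h_1,\ldots,h_{u(S)})$ into injectivity of the restrictions to $\Soc_{R_\p}(N_\p)^{\oplus u(S)}$ at each such~$\p$. Since $u(S)\le t(\p)$ for these $\p$, and since a direct-sum injection restricts along any coordinate inclusion to another injection, it suffices to find $h_1,\ldots,h_v\in F$ such that for every $\p\in\Ass_R(N)$, the restriction of $(h_1,\ldots,h_{t(\p)})_\p$ to $\Soc_{R_\p}(N_\p)^{\oplus t(\p)}$ is injective.

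For the construction: the hypothesis $t(\p)\le\inj^{F_\p}_{R_\p}(M_\p,N_\p)$ gives, for each $\p\in\Ass_R(N)$, maps $g_{\p,1},\ldots,g_{\p,t(\p)}\in F_\p$ whose concatenation is injective. Clearing denominators --- and using that every element of $R\setminus\p$ acts by a unit on the $\kappa(\p)$-vector space $\Soc_{R_\p}(N_\p)$ --- I would lift these to $\tilde g_{\p,1},\ldots,\tilde g_{\p,t(\p)}\in F$ with the same socle-injectivity at $\p$ (and set $\tilde g_{\p,j}=0$ for $j>t(\p)$). Then I would look for each $h_j$ as an $R$-linear combination $h_j=\sum_{\p} a_{\p,j}\,\tilde g_{\p,j}$ over $\p\in\Ass_R(N)$. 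At a prime $\p_i$, the socle matrix of $(h_1,\ldots,h_{t(\p_i)})_{\p_i}$ splits into a dominant full-rank contribution from the $\tilde g_{\p_i,j}$'s and perturbations from the $\tilde g_{\p_l,j}$ for $l\ne i$, each scaled by the residue of the corresponding coefficient in $\kappa(\p_i)$. Lemma~\ref{lemma:multi-rank-block}, applied with $k_i=\kappa(\p_i)$ and with $A_i,B_i$ encoding this splitting, then produces a single choice of coefficients attaining the required rank at every $\p_i$ at once; the case $S=\{1\}$ recovers the concluding sentence.

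The main obstacle is the last step: realizing the parameter sets $C_j$ demanded by Lemma~\ref{lemma:multi-rank-block} inside $R$. They must embed into every $\kappa(\p_i)$ while being large enough to force the rank bound, and the coefficients drawn from them must lie in $R$ in order that each $h_j$ belong to $F$. Producing subsets of $R$ of the prescribed finite size whose reductions modulo each associated prime are simultaneously injective is a prime-avoidance / CRT matter, and arranging the block decomposition so that the column counts $n_{ij}$ appearing in Lemma~\ref{lemma:multi-rank-block} stay small enough to make these $C_j$ findable is where I expect the bookkeeping to be heaviest.
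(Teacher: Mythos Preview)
Your reduction via Lemma~\ref{lemma:reduction} to socle-injectivity of $(h_1,\ldots,h_{t(\p)})_\p$ at each $\p\in\Ass_R(N)$ is correct and matches the paper.

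The gap is in the construction, precisely at the point you label ``the main obstacle'' and then set aside as bookkeeping. A single simultaneous application of Lemma~\ref{lemma:multi-rank-block} with $k_i=\kappa(\p_i)$ ranging over all associated primes requires parameter sets $C_j$ that embed into every $\kappa(\p_i)$ while having more than $\sum_i\min\{r_i,n_{ij}\}$ elements. If some $\p_i$ is maximal with $|\kappa(\p_i)|\le\dim_{\kappa(\p_i)}\Soc_{R_{\p_i}}(N_{\p_i})$---a case excluded by no hypothesis---no such $C_j$ exists, and neither prime avoidance nor CRT can manufacture one: you cannot inject a three-element set into~$\FF_2$. (There is also a structural mismatch: in your ansatz $h_j=\sum_{\p}a_{\p,j}\tilde g_{\p,j}$, the full-rank contribution at $\p_i$ comes from the $\tilde g_{\p_i,j}$ terms and the perturbation from the rest, but Lemma~\ref{lemma:multi-rank-block} scales the full-rank matrix $B_i$ by the unknowns and keeps $A_i$ fixed; here your ``fixed'' $A_i$ depends on the unknowns $a_{\p_l,j}$ for $l\ne i$, so the roles cannot be assigned consistently across all $i$ at once.)

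The paper circumvents both issues by splitting $\Ass_R(N)$. Maximal associated primes are handled first by a direct CRT step---choose $s(\m)\in R$ outside $\m$ but inside every other maximal associated prime, and form $(e_1,\ldots,e_v)=\sum_{\m}s(\m)d(\m)$ with $d(\m)$ a lift of a local witness---which needs no lower bound on $|\kappa(\m)|$. The non-maximal associated primes $\q_1,\ldots,\q_m$ are then processed one at a time by induction: at step $\ell$ one applies Lemma~\ref{lemma:multi-rank-block} with $s=1$ and $k_1=\kappa(\q_\ell)$, which is infinite since $\q_\ell\notin\Max(R)$, drawing the correction coefficients $c_j$ from the intersection $J$ of all previously handled primes so that $c_jg_j$ vanishes on each earlier socle and nothing already secured is disturbed.
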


To prove this theorem, it suffices to compute $h_1,\ldots,h_v\in F$ such that $(h_1,\ldots,h_{t(\p)})_{\p}$, when restricted to $\Soc_{R_{\p}}(N_{\p}^{\oplus t(\p)})$, is injective for every $\p\in\Ass_R(N)$, as Lemma~\ref{lemma:reduction} indicates below.  To find such maps $h_1,\ldots,h_v$, we use Lemma~\ref{lemma:multi-rank-block} as part of a ``general position" argument somewhat reminiscent of~\cite[Sections~6 and~7]{Bai1}.

\begin{lemma}\label{lemma:reduction}
	Let $M$ and $N$ be modules over a commutative ring~$R$ with $N$ Noetherian.  Let $h\in\Hom_R(N,M)$.  Then $h$ is injective if and only if $h_{\p}$, when restricted to $\Soc_{R_{\p}}(N_{\p})$, is injective for every $\p\in\Ass_R(N)$.
\end{lemma}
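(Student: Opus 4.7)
The plan is to prove the forward direction trivially via exactness of localization and to attack the reverse direction by contradiction, locating a witness to the failure of injectivity inside the socle at an associated prime of $N$.

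For the ``only if'' direction, if $h$ is injective then $h_\p$ is injective for every $\p \in \Spec(R)$ since localization is exact; restricting to the submodule $\Soc_{R_\p}(N_\p) \subseteq N_\p$ preserves injectivity. So this direction is essentially free.

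For the ``if'' direction, set $K := \ker(h)$ and aim to show $K = 0$, arguing by contradiction. Suppose $K \neq 0$. Since $N$ is Noetherian and $K \subseteq N$, the submodule $K$ is also Noetherian, so a standard argument (maximal elements among annihilators of nonzero elements of $K$ are prime) yields $\Ass_R(K) \neq \varnothing$; moreover, every associated prime of the submodule $K$ is an associated prime of $N$, so $\Ass_R(K) \subseteq \Ass_R(N)$. Fix any $\p \in \Ass_R(K)$, so that in particular $\p \in \Ass_R(N)$ and $\Soc_{R_\p}(K_\p) \neq 0$. Because localization is exact, $K_\p = \ker(h_\p)$, and $\Soc_{R_\p}(K_\p) \subseteq \Soc_{R_\p}(N_\p)$ since an element of $K_\p$ annihilated by $\p_\p$ is a fortiori an element of $N_\p$ annihilated by $\p_\p$. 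Pick $0 \neq x \in \Soc_{R_\p}(K_\p)$; then $x$ is a nonzero element of $\Soc_{R_\p}(N_\p)$ with $h_\p(x) = 0$, contradicting the hypothesis that $h_\p$ is injective on $\Soc_{R_\p}(N_\p)$.

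There is really no serious obstacle here; the proof is essentially a packaging of two standard facts, namely that a nonzero Noetherian module over a commutative ring has an associated prime (with respect to the socle-based definition the paper uses in Section~\ref{sec:tech}) and that $\Ass_R(K) \subseteq \Ass_R(N)$ whenever $K$ is a submodule of $N$. If desired, one can verify these directly from the paper's definition: for the former, take a maximal annihilator $\Ann_R(y)$ among nonzero $y \in K$ and check it is prime, observing that its image in $R_{\Ann_R(y)}$ annihilates the nonzero element $y/1$; for the latter, lift any witness $y \in K$ of an associated prime of $K$ to witness the same prime of $N$.
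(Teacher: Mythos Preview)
Your proof is correct and follows essentially the same approach as the paper: both set $K=\ker(h)$, use $\Ass_R(K)\subseteq\Ass_R(N)$, and observe that $\Soc_{R_\p}(K_\p)\subseteq\Soc_{R_\p}(N_\p)\cap K_\p=0$ for every $\p\in\Ass_R(N)$ to force $\Ass_R(K)=\varnothing$ and hence $K=0$. The only cosmetic difference is that you phrase the reverse direction as a contradiction and spell out the standard facts about associated primes, whereas the paper argues directly and cites them without comment.
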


\begin{proof}[Proof of Lemma~\ref{lemma:reduction}]
	The forward direction is obvious.  For the reverse direction, let $K:=\ker(h)$.  Then $\Ass_R(K)\subseteq\Ass_R(N)$.  Since $\Soc_{R_{\p}}(K_{\p})=K_{\p}\cap\Soc_{R_{\p}}(N_{\p})=0$ for every $\p\in\Ass_R(N)$, we see that $\Ass_R(K)=\varnothing$.  Hence $K=0$, and so $h$ is injective.
\end{proof}

\begin{proof}[Proof of Theorem~\ref{theorem:inj}]
	The main portion of our proof works only for $\p\in\Ass_R(N)$ such that $|\kappa(\p)|>\dim_{\kappa(\p)}(\Soc_{R_{\p}}(N_{\p}))$.  This condition always holds for the members of $\Ass_R(N)$ not in $\Max(R)$.  To account for the remaining members of $\Ass_R(N)$, we take inspiration from the ``general position" argument in~\cite[Section~6]{Bai1}:  For every $\m\in\Ass_R(N)\cap\Max(R)$, let $s(\m)$ be an element of $R$ that avoids $\m$ but belongs to every other member of $\Ass_R(N)\cap\Max(R)$, and let $d(\m)$ be a member of $F^{\oplus v}\subseteq\Hom_R(N^{\oplus v},M)$ whose first $t(\m)$ components form a map that becomes injective after localizing at $\m$ and whose last $v-t(\m)$ components are zero.  Let
	\[
	(e_1,\ldots,e_v):=\sum_{\m\in\Ass_R(N)\cap\Max(R)} s(\m)d(\m).
	\]
	Then, for every $\m\in\Ass_R(N)\cap\Max(R)$, the map $(e_1,\ldots,e_{t(\m)})_{\m}$ becomes injective when restricted to $\Soc_{R_{\m}}(N_{\m}^{\oplus t(\m)})$.  Hence, if $\Ass_R(N)\subseteq\Max(R)$, then we may take $(h_1,\ldots,h_v):=(e_1,\ldots,e_v)$ and apply Lemma~\ref{lemma:reduction} to close out the proof.
	
	Otherwise, we may continue by revising the ``general position" argument from~\cite[Section~7]{Bai1} in the following way:  Let $\q_1,\ldots,\q_m$ be the distinct members of $\Ass_R(N)\setminus\Max(R)$, and suppose that, for every $\ell\in\{1,\ldots,m\}$, the ideal $\q_1\cap\cdots\cap\q_{\ell-1}$ is not contained in~$\q_{\ell}$.  Fix an $\ell\in\{1,\ldots,m\}$, and choose a map $(f_1,\ldots,f_v)\in F^{\oplus v}$ as follows:  If $\ell=1$, then take $(f_1,\ldots,f_v):=(e_1,\ldots,e_v)$; if $\ell\geqslant 2$, then suppose inductively that, for every $\p\in\Ass_R(N)$ with $\p\not\in\{\q_{\ell},\ldots,\q_m\}$, the map $(f_1,\ldots,f_{t(\p)})_{\p}$ becomes injective when restricted to $\Soc_{R_{\p}}(N_{\p}^{\oplus t(\p)})$.  Let $J$ denote the intersection of the members of $\Ass_R(N)\setminus\{\q_{\ell},\ldots,\q_m\}$, and let $\q:=\q_{\ell}$ and $t:=t(\q)$.  Now let $(g_1,\ldots,g_t)\in F^{\oplus t}$ be a map that becomes injective after localizing at~$\q$.  Since $|(J + \q)/\q| = \infty > \dim_{\kappa(\q)}(\Soc_{R_{\q}}(N_\q))$, we may use Lemma~\ref{lemma:multi-rank-block} to conclude that there exist $c_1,\ldots,c_t \in J$ such that $(f_1, \dotsc, f_t) + (c_1g_1, \dotsc, c_tg_t)$ becomes injective after localizing at $\q$ and restricting to $\Soc_{R_{\q}}(N_{\q}^{\oplus t})$.  We may thus take $(f_1+c_1g_1, \dotsc, f_t+c_tg_t,f_{t+1},\ldots,f_v)$ to complete our inductive step, and we can conclude the proof with an application of Lemma~\ref{lemma:reduction}.
\end{proof}

\begin{corollary}\label{corollary:inj}
	Let $A \le C$ and $B \le D$ be modules over a commutative ring~$R$ with $A$ nonzero Noetherian, and let $G$ be an $R$-submodule of $\Hom_R(C,D)$.  For every $\p\in\Ass_R(A)$, let $t(\p)$ be a positive integer, and suppose that there exists a map in $G_{\p}^{\oplus t(\p)}\subseteq\Hom_{R_{\p}}(C_{\p}^{\oplus t(\p)},D_{\p})$ under which the preimage of $B_{\p}$ has trivial intersection with $A_{\p}^{\oplus t(\p)}$. Next, for every multiplicatively closed subset $S$ of~$R$ avoiding~$\Ann_R(A)$, define $u(S):=\min\{t(\p):\p\in\Ass_R(A),\p\cap S=\varnothing\}$.  Also, let $v:=\max\{t(\p):\p\in\Ass_R(A)\}$. Then there exist $h_1,\ldots,h_v\in G$ such that the preimage of $S^{-1}B$ under $S^{-1}(h_1,\ldots,h_{u(S)})$ has trivial intersection with $S^{-1}A^{\oplus u(S)}$ for every multiplicatively closed subset $S$ of~$R$ avoiding~$\Ann_R(A)$.  Hence, if $u:=u(\{1\})$, then the preimage of $B$ under $(h_1,\ldots,h_u)$ has trivial intersection with $A^{\oplus u}$. 
\end{corollary}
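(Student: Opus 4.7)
The plan is to reduce Corollary~\ref{corollary:inj} to Theorem~\ref{theorem:inj} by passing to the quotient $D/B$.  Let $\pi\colon D\to D/B$ denote the canonical surjection, and define an $R$-linear map $\Phi\colon G\to\Hom_R(A,D/B)$ by $\Phi(g):=\pi\circ g|_A$.  Set $F:=\Phi(G)$, an $R$-submodule of $\Hom_R(A,D/B)$.  The crux of the reduction is the elementary observation that, for any $h_1,\ldots,h_t\in G$, the preimage of $B$ under $(h_1,\ldots,h_t)\colon C^{\oplus t}\to D$ intersects $A^{\oplus t}$ trivially if and only if $(\Phi(h_1),\ldots,\Phi(h_t))\colon A^{\oplus t}\to D/B$ is injective; this is immediate from the definitions, since both conditions unpack to the statement that any $(a_1,\ldots,a_t)\in A^{\oplus t}$ with $\sum h_i(a_i)\in B$ satisfies $a_i=0$ for each $i$.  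The same equivalence persists verbatim after localizing at any multiplicatively closed $S\subseteq R$ avoiding $\Ann_R(A)$.

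Next, I would verify that $F$ satisfies the hypothesis of Theorem~\ref{theorem:inj} applied to $M=D/B$ and $N=A$.  Exactness of localization gives $F_{\p}=\Phi_{\p}(G_{\p})$ for every $\p\in\Spec(R)$, and applying $\Phi_{\p}$ component-wise to the map in $G_{\p}^{\oplus t(\p)}$ supplied by hypothesis produces a map in $F_{\p}^{\oplus t(\p)}$ which, by the translation above, is injective on $A_{\p}^{\oplus t(\p)}$.  Hence $t(\p)\leqslant \inj^{F_{\p}}_{R_{\p}}((D/B)_{\p},A_{\p})$ for every $\p\in\Ass_R(A)$, exactly matching the input required by Theorem~\ref{theorem:inj}.

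Applying that theorem to the triple $(D/B,A,R)$ restricted to $F$ then delivers $h_1',\ldots,h_v'\in F$ such that $S^{-1}(h_1',\ldots,h_{u(S)}')$ is injective on $S^{-1}A^{\oplus u(S)}$ for every multiplicatively closed $S\subseteq R$ avoiding $\Ann_R(A)$.  Since $F=\Phi(G)$, each $h_i'$ lifts to some $h_i\in G$ with $\Phi(h_i)=h_i'$; translating the injectivity of $S^{-1}(\Phi(h_1),\ldots,\Phi(h_{u(S)}))$ back through the correspondence described in the first paragraph recovers precisely the conclusion of the corollary, with the final sentence following by taking $S=\{1\}$.  The chief obstacle, I expect, is not conceptual but bookkeeping:  one must track the compatibilities between restriction to $A$, composition with $\pi$, and localization carefully enough to be sure that the localized $F_{\p}$ truly coincides with the image of $G_{\p}$ featuring in the hypothesis.
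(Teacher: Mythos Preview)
Your proposal is correct and follows exactly the paper's approach: the paper's proof is the single sentence that Corollary~\ref{corollary:inj} is an application of Theorem~\ref{theorem:inj} with $N=A$, $M=D/B$, and $F$ the image of $G$ under the natural map $\Hom_R(C,D)\to\Hom_R(A,D/B)$, and your write-up simply unpacks that sentence with the translation between the preimage condition and injectivity and the lifting of the $h_i'$ back to $G$.
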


\begin{proof}
	This is an application of Theorem~\ref{theorem:inj} to $\inj_R^F(M,N)$ with $N=A$, $M = D/B$, and $F$ being the image of $G$ under the natural map $\Hom_R(C,D) \to \Hom_R(A,D/B)$.
\end{proof}

\begin{theorem}\label{theorem:cog}
	Let $M$ and $N$ be modules over a commutative ring~$R$ with $N$ nonzero Noetherian, and let $F$ be an $R$-submodule of $\Hom_R(N,M)$.  For every $\p\in\Ass_R(N)$, suppose that $\cog^{F_{\p}}_{R_{\p}}(N_{\p},M_{\p})$ is finite, and let $t(\p)$ be an integer $\geqslant\cog^{F_{\p}}_{R_{\p}}(N_{\p},M_{\p})$.  For every multiplicatively closed subset $S$ of~$R$ avoiding~$\Ann_R(N)$, let $v(S):=\max\{t(\p):\p\in\Ass_R(N),\hspace{1mm}\p\cap S=\varnothing\}$.  Also, let $v:=v(\{1\})$.  Then there exist $h_1,\ldots,h_v\in F$ such that $S^{-1}(h_1,\ldots,h_{v(S)})^{\top}$ is injective for every multiplicatively closed subset $S$ of~$R$ avoiding~$\Ann_R(N)$.  Hence $(h_1,\ldots,h_v)^{\top}$ is injective.	
\end{theorem}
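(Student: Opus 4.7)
The plan is to run the exact dual of the proof of Theorem~\ref{theorem:inj}: I would replace each row $(h_1,\ldots,h_t)\in\Hom_R(N^{\oplus t},M)$ by a column $(h_1,\ldots,h_t)^{\top}\in\Hom_R(N,M^{\oplus t})$ and invoke the transposed form of Lemma~\ref{lemma:multi-rank-block} in place of its original form. The first step is to reduce to a socle criterion: by Lemma~\ref{lemma:reduction} applied over $S^{-1}R$, the map $S^{-1}(h_1,\ldots,h_{v(S)})^{\top}$ is injective iff, for every $\p\in\Ass_R(N)$ with $\p\cap S=\varnothing$, the map $(h_1,\ldots,h_{v(S)})^{\top}_{\p}$ is injective on $\Soc_{R_{\p}}(N_{\p})$. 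Since $v(S)\geq t(\p)$ for every such $\p$, and appending entries to an injective column cannot destroy injectivity, it suffices to produce $h_1,\ldots,h_v\in F$ so that $(h_1,\ldots,h_{t(\p)})^{\top}_{\p}|_{\Soc(N_{\p})}$ is injective for every $\p\in\Ass_R(N)$. Pulling back the hypothesis $\cog^{F_{\p}}_{R_{\p}}(N_{\p},M_{\p})\leq t(\p)$ and clearing denominators supplies maps $f^{\p}_1,\ldots,f^{\p}_{t(\p)}\in F$ whose column is injective at $\p$ on the socle.

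Next I would construct $h_1,\ldots,h_v$ by mirroring the two-phase argument of Theorem~\ref{theorem:inj}. For each $\m\in\Ass_R(N)\cap\Max(R)$, pick $s(\m)\in R\setminus\m$ lying in every other maximal member of $\Ass_R(N)$, pad the column $(f^{\m}_1,\ldots,f^{\m}_{t(\m)})^{\top}$ with zeros to length $v$, and set $(e_1,\ldots,e_v)^{\top}:=\sum_{\m}s(\m)(\text{padded column at }\m)$; because $\m$ annihilates $\Soc(N_{\m})$ the contributions with $\m'\neq\m$ die on that socle, while $s(\m)$ is a unit at $\m$, so $(e_1,\ldots,e_{t(\m)})^{\top}_{\m}|_{\Soc(N_{\m})}$ is injective for every maximal $\m\in\Ass_R(N)$. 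Next, list the non-maximal members $\q_1,\ldots,\q_m$ of $\Ass_R(N)\setminus\Max(R)$ so that $\q_1\cap\cdots\cap\q_{\ell-1}\not\subseteq\q_{\ell}$ for each $\ell$, and inductively at step $\ell$ set $\q:=\q_{\ell}$, $t:=t(\q)$, $J:=\bigcap\{\p\in\Ass_R(N):\p\not\in\{\q_{\ell},\ldots,\q_m\}\}$, and choose $(g_1,\ldots,g_t)^{\top}\in F^{\oplus t}$ injective at $\q$ on the socle. The transposed version of Lemma~\ref{lemma:multi-rank-block}, applied with $s=1$, $k_1=\kappa(\q)$, and each $C_j$ a sufficiently large finite subset of $(J+\q)/\q$ (which is infinite because $R/\q$ is a domain and $J\not\subseteq\q$), yields $c_1,\ldots,c_t\in J$ such that replacing $f_i$ by $f_i+c_ig_i$ for $i\leq t$ restores injectivity at $\q$ on the socle; since each $c_i$ lies in every previously handled $\p$ and such $\p$ annihilates $\Soc(N_{\p})$, earlier injectivity is preserved.

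The main subtlety is wiring up the matrix data that feeds the transposed Lemma~\ref{lemma:multi-rank-block}: fix a $\kappa(\q)$-basis of $\Soc(N_{\q})$ and, for each $j\leq t$, a $\kappa(\q)$-basis of a finite-dimensional subspace of $M_{\q}$ containing both $(f_j)_{\q}(\Soc(N_{\q}))$ and $(g_j)_{\q}(\Soc(N_{\q}))$; the two restrictions then become matrices $A_j$ and $B_j$, and injectivity of the stacked column on the socle is precisely full column rank of the vertical concatenation of the matrices $A_j+c_jB_j$, which is exactly the transposed case of the lemma with $r_1=\dim_{\kappa(\q)}\Soc(N_{\q})$. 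The size bound on $C_j$ is then at worst $|C_j|>\dim_{\kappa(\q)}\Soc(N_{\q})$, easily met by the infinite set $(J+\q)/\q$. A final appeal to Lemma~\ref{lemma:reduction} through the first-paragraph reduction closes the argument and, specializing to $S=\{1\}$, delivers the promised injection $(h_1,\ldots,h_v)^{\top}$.
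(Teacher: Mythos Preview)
Your proposal is correct and follows essentially the same approach as the paper: the paper's proof is a two-sentence instruction to dualize Theorem~\ref{theorem:inj} by transposing all matrices, replacing $N_{\p}^{\oplus t(\p)}$ and $M_{\p}$ with $N_{\p}$ and $M_{\p}^{\oplus t(\p)}$, and invoking the second (transposed) claim of Lemma~\ref{lemma:multi-rank-block}; you have simply written out those details explicitly, including the socle reduction via Lemma~\ref{lemma:reduction}, the two-phase handling of maximal versus non-maximal associated primes, and the matrix bookkeeping that feeds the transposed lemma.
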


\begin{proof}
	We may proceed as in the proof of Theorem~\ref{theorem:inj} except that we must take the transpose of every matrix involved and consider $N_{\p}$ and $M_{\p}^{\oplus t(\p)}$ instead of $N_{\p}^{\oplus t(\p)}$ and $M_{\p}$ for every $\p\in\Ass_R(N)$.  Note that this proof uses the second claim of Lemma~\ref{lemma:multi-rank-block} rather than the first.
\end{proof}

\begin{corollary}\label{corollary:cog}
	Let $A \le C$ and $B \le D$ be modules over a commutative ring~$R$ with $A$ nonzero Noetherian, and let $G$ be an $R$-submodule of $\Hom_R(C,D)$. For every $\p\in\Ass_R(A)$, let $t(\p)$ be an integer, and suppose that there is a map in $G_{\p}^{\oplus t(\p)}\subseteq\Hom_{R_{\p}}(C_{\p},D_{\p}^{\oplus t(\p)})$ under which the preimage of $B_{\p}^{\oplus t(\p)}$ has trivial intersection with $A_{\p}$. Next, for every multiplicatively closed subset $S$ of~$R$ avoiding~$\Ann_R(A)$, define $v(S):=\max\{t(\p):\p\in\Ass_R(A),\p\cap S=\varnothing\}$.  Also, let $v:=v(\{1\})$. Then there exist $h_1,\ldots,h_v\in G$ such that the preimage of $S^{-1}B^{\oplus v(S)}$ under $S^{-1}(h_1,\ldots,h_{v(S)})^{\top}$ has trivial intersection with $S^{-1}A$ for every multiplicatively closed subset $S$ of~$R$ avoiding~$\Ann_R(A)$.  Hence, if $v:=v(\{1\})$, then the preimage of $B^{\oplus v}$ under $(h_1,\ldots,h_v)^{\top}$ has trivial intersection with $A$. 	
\end{corollary}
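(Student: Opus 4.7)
The plan is to mirror the derivation of Corollary~\ref{corollary:inj} from Theorem~\ref{theorem:inj}: reduce the statement to Theorem~\ref{theorem:cog} by replacing the pair $(C,D)$ with $(A,D/B)$ and $G$ with its image in $\Hom_R(A,D/B)$.

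More concretely, I would set $N := A$, $M := D/B$, and let $\pi\colon \Hom_R(C,D) \to \Hom_R(A,D/B)$ be the natural map given by restriction to $A$ followed by the quotient $D \onto D/B$. Put $F := \pi(G)$. Since $A$ is Noetherian, localization commutes with $\Hom_R(A,-)$, and since it also commutes with $\pi$ and with quotients, for each $\p \in \Ass_R(A)$ the image $F_\p = \pi_\p(G_\p)$ sits inside $\Hom_{R_\p}(A_\p,(D/B)_\p)$. A map in $G_\p^{\oplus t(\p)} \subseteq \Hom_{R_\p}(C_\p, D_\p^{\oplus t(\p)})$ whose preimage of $B_\p^{\oplus t(\p)}$ meets $A_\p$ only in zero corresponds, under $\pi_\p^{\oplus t(\p)}$, to an injection $A_\p \hookrightarrow (D/B)_\p^{\oplus t(\p)}$ lying in $F_\p^{\oplus t(\p)}$. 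So the hypothesis yields $\cog^{F_\p}_{R_\p}(N_\p,M_\p) \le t(\p)$ for every $\p \in \Ass_R(A)$.

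Applying Theorem~\ref{theorem:cog} then produces $h_1,\ldots,h_v \in F$ such that $S^{-1}(h_1,\ldots,h_{v(S)})^{\top}$ is injective for every multiplicatively closed $S \subseteq R$ avoiding $\Ann_R(N) = \Ann_R(A)$. I would lift each $h_i$ to some $g_i \in G$ with $\pi(g_i) = h_i$. The map $S^{-1}(g_1,\ldots,g_{v(S)})^{\top}\colon S^{-1}C \to S^{-1}D^{\oplus v(S)}$, restricted to $S^{-1}A$ and post-composed with the projection $S^{-1}D^{\oplus v(S)} \onto S^{-1}(D/B)^{\oplus v(S)}$, coincides with $S^{-1}(h_1,\ldots,h_{v(S)})^{\top}$; its kernel is precisely the intersection of $S^{-1}A$ with the preimage of $S^{-1}B^{\oplus v(S)}$ under $S^{-1}(g_1,\ldots,g_{v(S)})^{\top}$, so the injectivity just obtained delivers the desired trivial intersection.

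The main obstacle is bookkeeping rather than substance: confirming that $\pi$ interacts correctly with localization (so that $F_\p = \pi_\p(G_\p)$ and the reformulation of the hypothesis as a bound on $\cog^{F_\p}_{R_\p}$ is faithful), and that ``preimage intersected with $A$'' translates literally into the kernel of the composite with the quotient map. Both points are routine exactness checks, so the corollary should follow essentially formally from Theorem~\ref{theorem:cog}.
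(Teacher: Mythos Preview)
Your approach is exactly the paper's: apply Theorem~\ref{theorem:cog} with $N=A$, $M=D/B$, and $F$ the image of $G$ under the natural map $\Hom_R(C,D)\to\Hom_R(A,D/B)$. The additional bookkeeping you spell out (translating the trivial-intersection hypothesis into a bound on $\cog^{F_\p}_{R_\p}$ and lifting the resulting $h_i$ back to~$G$) is precisely what the paper leaves implicit in its one-line proof.
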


\begin{proof}
	This is an application of Theorem~\ref{theorem:cog} to $\cog_R^F(N,M)$ with $N=A$, $M = D/B$, and $F$ being the image of $G$ under the natural map $\Hom_R(C,D) \to \Hom_R(A,D/B)$.
\end{proof}

\begin{theorem}\label{theorem:summary}
	Let $M$ and $N$ be modules over a commutative ring~$R$ with $N$ Noetherian, and let $F$ be an $R$-submodule of $\Hom_R(N,M)$.  Then the following statements hold:
	\begin{enumerate}
		\item $\inj^F_R(M,N)=\inf\{\inj^{F_{\p}}_{R_{\p}}(M_{\p},N_{\p}):\p\in\Ass_R(N)\}$.
		\item $\cog^F_R(N,M)=\sup\{\cog^{F_{\p}}_{R_{\p}}(N_{\p},M_{\p}):\p\in\Ass_R(N)\}$.		
		\item $F$ contains an injection if and only if $F_{\p}$ contains an injection for every $\p\in\Ass_R(N)$.
	\end{enumerate}
In fact, in each of the statements above, we can replace $\Ass_R(N)$ with the set consisting of just the maximal members of~$\Ass_R(N)$.
\end{theorem}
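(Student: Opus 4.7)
The plan is to derive all three parts of Theorem~\ref{theorem:summary} directly from Theorems~\ref{theorem:inj} and~\ref{theorem:cog}, and then obtain the sharpening to maximal members of $\Ass_R(N)$ via a monotonicity-of-localization argument. Before anything else, I would dispose of the trivial case $N=0$: here $\Ass_R(N)=\varnothing$, and each side of each claim reduces to its default (for instance, $\inj^F_R(M,0)=\infty=\inf\varnothing$, while $\cog^F_R(0,M)=0=\sup\varnothing$, and vacuous quantification handles (3)). So I will assume $N\neq 0$, in which case $\Ass_R(N)$ is a nonempty finite set.

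For part (1), the easy inequality comes from the exactness of localization: any injection $(h_1,\dots,h_t)\in F^{\oplus t}$ localizes at every $\p\in\Ass_R(N)$ to an injection in $F_{\p}^{\oplus t}$, whence $\inj^F_R(M,N)\leq\inj^{F_{\p}}_{R_{\p}}(M_{\p},N_{\p})$ for all such $\p$. For the reverse, let $s$ denote the right-hand infimum. For any positive integer $t\leq s$, set $t(\p):=t$ for every $\p\in\Ass_R(N)$; the hypothesis $t(\p)\leq\inj^{F_{\p}}_{R_{\p}}(M_{\p},N_{\p})$ of Theorem~\ref{theorem:inj} holds, so the theorem directly yields $h_1,\dots,h_t\in F$ with $(h_1,\dots,h_t)$ injective, giving $\inj^F_R(M,N)\geq t$ and hence $\geq s$. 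Part (2) is proved dually using Theorem~\ref{theorem:cog}: the easy inequality again localizes injections; for the reverse, letting $s$ denote the right-hand supremum, I would split into the case $s<\infty$ (where finiteness of $\Ass_R(N)$ ensures every $\cog^{F_{\p}}_{R_{\p}}$ is finite, so that Theorem~\ref{theorem:cog} applies with $t(\p):=s$) and $s=\infty$ (where the easy direction already forces $\cog^F_R(N,M)=\infty$). Part (3) then follows from (1) by observing that $F$ contains an injection if and only if $\inj^F_R(M,N)\geq 1$.

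For the refinement to maximal members of $\Ass_R(N)$, the key observation is that whenever $\p\subseteq\q$ in $\Spec(R)$, we have $R_{\p}=(R_{\q})_{\p R_{\q}}$, so localization at $\p$ factors through localization at $\q$. By exactness, any injection in $F_{\q}^{\oplus t}$ localizes further to an injection in $F_{\p}^{\oplus t}$, yielding the monotonicity
\[
\inj^{F_{\p}}_{R_{\p}}(M_{\p},N_{\p})\geq\inj^{F_{\q}}_{R_{\q}}(M_{\q},N_{\q})\quad\text{and}\quad\cog^{F_{\p}}_{R_{\p}}(N_{\p},M_{\p})\leq\cog^{F_{\q}}_{R_{\q}}(N_{\q},M_{\q}).
\]
Hence the infimum in (1) and the supremum in (2) are already achieved on the maximal members of $\Ass_R(N)$, and for (3) an injection at each maximal $\q\in\Ass_R(N)$ descends by localization to an injection at each smaller $\p\in\Ass_R(N)$. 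The main obstacle is purely bookkeeping: handling the edge cases $N=0$ and infinite invariants, and checking the correct direction of monotonicity under prime inclusion; all the substantive work is already absorbed into Theorems~\ref{theorem:inj} and~\ref{theorem:cog}.
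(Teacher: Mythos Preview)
Your proposal is correct and follows essentially the same approach as the paper: the easy inequalities come from exactness of localization, the reverse inequalities are read off directly from Theorems~\ref{theorem:inj} and~\ref{theorem:cog}, part~(3) is the case $\inj^F_R(M,N)\geq 1$, and the sharpening to maximal associated primes is the observation that localization preserves injectivity (which you spell out as a monotonicity under prime inclusion). Your explicit treatment of the case $N=0$ and of the infinite/zero edge cases is a bit more careful than the paper's terse version, but the argument is the same.
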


\begin{proof}
	(1)  Let $t$ and $u$ denote the left and right sides of the asserted equation, respectively.  Since an injective map remains injective upon localization, $t\leqslant u$.  To prove the reverse inequality, we note that either $u=0$, in which case $u\leqslant t$ automatically, or else Theorem~\ref{theorem:inj} shows that $u\leqslant t$, including the case in which $u = \infty$.
	
	(2)  Label the left and right sides of the proposed equality as $t$ and~$v$, respectively.  If $v$ is infinite, then $t\leqslant v$ at once; otherwise, Theorem~\ref{theorem:cog} suffices.  We may also consider two cases to prove that $v\leqslant t$:  If $t$ is infinite, then $v\leqslant t$ immediately; otherwise, we may appeal to the exactness of localization.
	
	(3)  We may apply either Part~(1) or Part~(2) of the present theorem since $F$ contains an injection if and only if $\inj^F_R(M,N)\geqslant 1$ if and only if $\cog^F_R(N,M)\leqslant 1$.
	
	Having proved Parts (1), (2), and (3), we yield the last claim of the theorem by recalling once again that localization preserves injectivity.
\end{proof}

	\section{A graded embedding theorem}\label{sec:inj-graded}

	We close this paper with some thoughts on the existence of a homogeneous monomorphism between two $\ZZ$-graded modules.  We refer the reader to~\cite[Section~1.5]{BH} for an introduction to $\ZZ$-graded rings and modules but summarize some standard notation and results here for the reader's convenience.  To begin with, a \textit{$\ZZ$-grading} on a commutative ring $R$ is a decomposition of $R$ into a direct sum of $\ZZ$-modules $\ldots,R_{-1},R_0,R_1,\ldots$ such that, for all $i,j\in\ZZ$ and $a\in R_i$ and $b\in R_j$, the element $ab$ belongs to $R_{i+j}$.  In this section, $R$ denotes a commutative ring with a fixed $\ZZ$-grading.  A \textit{$\ZZ$-grading} on an $R$-module $M$ is a decomposition of $M$ into a direct sum of $\ZZ$-modules $\ldots,M_{-1},M_0,M_1,\ldots$ such that, for all $i,j\in\ZZ$ and $a\in R_i$ and $x\in M_j$, the element $ax$ belongs to $M_{i+j}$.  Note that $R_0$ is a $\ZZ$-graded commutative ring and that every $\ZZ$-graded $R$-module is naturally a $\ZZ$-graded $R_0$-module.  Henceforth, $M$ and $N$ signify $R$-modules with fixed $\ZZ$-gradings.  For every $i\in\ZZ$, a member of $M_i$ is called a \textit{homogeneous element of $M$ of degree~$i$}, and $M_i$ is referred to as the \textit{$i$th homogeneous component of~$M$}.  Also, for every~$i\in\ZZ$, the symbol $M[i]$ stands for the $\ZZ$-graded $R$-module such that, for every~$j\in\ZZ$, we have $(M[i])_j=M_{i+j}$.	If $L$ is a $\ZZ$-graded $R$-submodule of~$M$, then $M/L$ is naturally a $\ZZ$-graded $R$-module such that, for every $i\in\ZZ$, we have $(M/L)_i\cong M_i/L_i$ as $R_0$-modules.  We assume that $N$ is Noetherian, which is equivalent to assuming that every ascending chain of $\ZZ$-graded $R$-submodules of $N$ stabilizes.  As a consequence, every member of $\Ass_R(N)$ is a $\ZZ$-graded ideal of $R$, and $\Hom_R(N,M)$ is a $\ZZ$-graded $R$-module such that, for every $i\in\ZZ$, the $i$th homogeneous component of $\Hom_R(N,M)$ is the set of all $f$ such that, for every~$j\in\ZZ$, we have $f(N_j)\subseteq M_{i+j}$.  For every $\p\in\Spec(R)$, if $S$ is the set of all homogeneous elements of $R$ avoiding~$\p$, then the \textit{homogeneous localization $M_{(\p)}$ of $M$ at $\p$} refers to the $\ZZ$-graded $S^{-1}R$-module~$S^{-1}M$ such that, for every~$i\in\ZZ$, the $i$th homogeneous component of $S^{-1}M$ is the set of all $x/s$ such that $x\in M_{i+j}$ and $s\in R_j\setminus\p$ for some~$j\in\ZZ$.  For every $\ZZ$-graded $\p\in\Spec(R)$, the \textit{socle of $N_{(\p)}$ over $R_{(\p)}$}, denoted $\Soc_{R_{(\p)}}(N_{(\p)})$, is the $\ZZ$-graded module over $R_{(\p)}/\p_{(\p)}$ consisting of all elements of $N_{(\p)}$ annihilated by~$\p_{(\p)}$.  A prime $\p$ of $R$ is associated to $N$ if and only if $\p$ is $\ZZ$-graded and $\Soc_{R_{(\p)}}(N_{(\p)})\neq 0$.     
	
	We now offer a $\ZZ$-graded version of Theorem~\ref{theorem:summary} in the case that $R_0/\p_0$ is sufficiently large for every $\p\in\Ass_R(N)$.  
	
	\begin{theorem}\label{theorem:inj-graded}
		Let $M$ and $N$ be $\ZZ$-graded modules over a $\ZZ$-graded commutative ring~$R$ with $N$ Noetherian.  For every $\p\in\Ass_R(N)$, let $r(\p):=\rank_{R_{(\p)}/\p_{(\p)}}(\Soc_{R_{(\p)}}(N_{(\p)}))$; let $P(\p):=\{\p'\in\Ass_R(N):\p'_0=\p_0\}$; and assume that $|R_0/\p_0|>\sum_{\p'\in P(\p)} r(\p')$.  Let $F$ be a $\ZZ$-graded $R$-submodule of $\Hom_R(N,M)$.  Assume that there exists a degree $i$ such that, for every $\p \in \Ass_R(N)$, the $i$th homogeneous component $F_i$ of $F$ contains a map that becomes injective after homogeneous localization at~$\p$.  Then $F_i$ contains an injection.
	\end{theorem}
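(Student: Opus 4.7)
\smallskip

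The plan is to mimic the proof of Theorem~\ref{theorem:inj}, adapted to the graded setting. The essential new feature is that, to keep linear combinations of maps inside~$F_i$, every scalar adjustment must be drawn from~$R_0$, so two such scalars are distinct modulo a prime $\p\in\Ass_R(N)$ precisely when they have distinct images in the subfield $R_0/\p_0$ of $R_{(\p)}/\p_{(\p)}$. This is exactly what the hypothesis $|R_0/\p_0|>\sum_{\p'\in P(\p)}r(\p')$ is designed to control.

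The first step is to establish the graded analog of Lemma~\ref{lemma:reduction}: a homogeneous map $h\in F_i$ is injective if and only if, for every $\p\in\Ass_R(N)$, the restriction of $h_{(\p)}$ to $\Soc_{R_{(\p)}}(N_{(\p)})$ is injective. The proof copies that of Lemma~\ref{lemma:reduction}, since the kernel of a homogeneous map is a graded submodule whose associated primes are graded and belong to $\Ass_R(N)$.

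Next I would partition $\Ass_R(N)$ into equivalence classes under the relation defined by $\p_0=\p'_0$, label the distinct contractions $\p_0^{(1)},\ldots,\p_0^{(s)}$, and let $P^{(l)}$ denote the class attached to $\p_0^{(l)}$. By prime avoidance in~$R_0$, these classes may be ordered so that no earlier $\p_0^{(l')}$ is contained in~$\p_0^{(l)}$, forcing $\bigcap_{l'<l}\p_0^{(l')}\not\subseteq\p_0^{(l)}$. Because $R_0/\p_0^{(l)}$ is a domain, the image of this intersection there is a nonzero ideal of cardinality $|R_0/\p_0^{(l)}|>\sum_{\p'\in P^{(l)}}r(\p')$. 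I would then construct $H\in F_i$ iteratively in $s$~stages, beginning with $H=0$. At stage~$l$, enumerate $P^{(l)}=\{\q_1,\ldots,\q_{m_l}\}$, and for each~$j$ let $f_j\in F_i$ be a map with $(f_j)_{(\q_j)}$ injective on $\Soc_{R_{(\q_j)}}(N_{(\q_j)})$. An application of Lemma~\ref{lemma:multi-rank}---taking each field to be $R_{(\q_j)}/\q_{j,(\q_j)}$, extracting matrix entries from fixed homogeneous bases of the socles together with a finite-dimensional $(R_{(\q_j)}/\q_{j,(\q_j)})$-subspace of $\Ann_{M_{(\q_j)}}(\q_{j,(\q_j)})$ containing the relevant images of $H$ and every~$f_j$ (so that $f_j$ attains the required rank~$r(\q_j)$ at its own prime), and taking each $C_j$ to be a lift to $\bigcap_{l'<l}\p_0^{(l')}\subseteq R_0$ of $1+\sum_k r(\q_k)$ distinct elements of the nonzero ideal above---yields $c_1,\ldots,c_{m_l}\in R_0\cap\bigcap_{l'<l}\p_0^{(l')}$ for which $H+\sum_j c_j f_j$ is injective on the socle at every $\q\in P^{(l)}$. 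Because each $c_j$ lies in $\p_0^{(l')}$ for every $l'<l$, it acts as~$0$ on the socle at any previously handled prime, so earlier socle injectivity is preserved. After $s$~stages, $H$ is injective on the socle at every $\p\in\Ass_R(N)$, and hence injective by the graded analog of Lemma~\ref{lemma:reduction}.

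The main obstacle is the translation of socle-injectivity at a prime~$\p$ into the matrix-rank language of Lemma~\ref{lemma:multi-rank}. Since the socle is annihilated by~$\p_{(\p)}$, the image of any $R_{(\p)}$-linear map from it into $M_{(\p)}$ lies inside the $(R_{(\p)}/\p_{(\p)})$-vector space $\Ann_{M_{(\p)}}(\p_{(\p)})$, which is what allows linear independence to be tested over the graded residue field; moreover, scalar multiplication by $c\in R_0$ acts on the socle through the image of~$c$ in the subfield $R_0/\p_0\subseteq R_{(\p)}/\p_{(\p)}$, which supplies the embeddings required to invoke Lemma~\ref{lemma:multi-rank}.
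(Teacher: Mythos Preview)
Your proof is correct and follows essentially the same route as the paper's: partition $\Ass_R(N)$ by the degree-zero contraction $\p_0$, order the fibers so that $\bigcap_{l'<l}\p_0^{(l')}\not\subseteq\p_0^{(l)}$, and at each stage use Lemma~\ref{lemma:multi-rank} with scalars drawn from that intersection to adjust $H$ on the current fiber without disturbing earlier socle injectivity, then finish with the graded analogue of Lemma~\ref{lemma:reduction}. Two minor terminological slips are worth correcting: the ordering is obtained by successively choosing maximal elements (not ``prime avoidance''), and $R_0/\p_0$ is in general only a domain, not a subfield, though it still injects into $R_{(\p)}/\p_{(\p)}$, which is all that Lemma~\ref{lemma:multi-rank} requires.
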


	The following example shows that we cannot remove the uniformity condition on degree in Theorem~\ref{theorem:inj-graded}:  Suppose that $R=R_0$ and that $R$ has two incomparable prime ideals $\p$ and~$\q$.  Let $N: = (R/\p) \oplus (R/\q)$, and let $M := (R/\p)[-1] \oplus (R/\q)$.  Then $\Hom_R(N,M)$ contains homogeneous maps $f$ and $g$ such that $f_{(\p)}$ and $g_{(\q)}$ are injective, but $\Hom_R(N,M)$ does not itself contain a homogeneous injection.

	Since the uniformity condition in Theorem~\ref{theorem:inj-graded} cannot be removed, we would like to mention one way to guarantee that this condition is satisfied:  First assume that, for every $\p\in\Ass_R(N)$, there exists a homogeneous map $f(\p)$ in $\Hom_R(N,M)$ of degree $i(\p)$ that becomes injective after homogeneous localization at~$\p$.  Next, suppose that there exists $i \in \ZZ$ such that, for every $\p\in\Ass_R(N)$, we have $R_{i-i(\p)}\not\subseteq \p$.  Then, for every $\p\in\Ass_R(N)$, we can choose $s(\p) \in R_{i - i(\p)}\setminus \p$ and then replace $f(\p)$ with $s(\p)f(\p) \in F_i$.

	We can reduce the proof of Theorem~\ref{theorem:inj-graded} to finding a degree-$i$ homogeneous map in $\Hom_R(N,M)$ that becomes injective after homogeneously localizing at any associated prime of~$N$ and restricting to the socle, as the following lemma indicates.  We omit the proof of the following lemma since it is similar to the one for Lemma~\ref{lemma:reduction}.
	
	\begin{lemma}\label{lemma:reduction-graded}
		Let $M$ and $N$ be $\ZZ$-graded modules over a $\ZZ$-graded commutative ring~$R$ with $N$ Noetherian.  Let $h$ be a homogeneous map in $\Hom_R(N,M)$.  Then $h$ is injective if and only if $h_{(\p)}$, when restricted to $\Soc_{R_{(\p)}}(N_{(\p)})$, is injective for every $\p\in\Ass_R(N)$.
	\end{lemma}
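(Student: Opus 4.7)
The plan is to imitate the proof of Lemma~\ref{lemma:reduction}, translating each step from ordinary localization at $\p$ to homogeneous localization at~$\p$. The content of the lemma is a biconditional, and the forward direction is routine: homogeneous localization is an exact functor on $\ZZ$-graded modules and homogeneous maps, so if $h$ is injective then $h_{(\p)}$ is injective, and restricting an injective map to any $R_{(\p)}$-submodule of~$N_{(\p)}$, in particular to $\Soc_{R_{(\p)}}(N_{(\p)})$, preserves injectivity. This handles one implication.

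For the reverse direction, set $K := \ker(h)$. Since $h$ is homogeneous, $K$ is a $\ZZ$-graded $R$-submodule of~$N$, hence Noetherian. The standard inclusion $\Ass_R(K)\subseteq\Ass_R(N)$ (valid for any $R$-submodule $K$ of $N$) then applies, and by the characterization of associated primes recorded just before the statement of Theorem~\ref{theorem:inj-graded}, every member of $\Ass_R(K)$ is a $\ZZ$-graded prime~$\p$ with $\Soc_{R_{(\p)}}(K_{(\p)})\neq 0$.

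The crux is then to rule out all such~$\p$. Fix $\p\in\Ass_R(N)$. Since homogeneous localization is exact, $K_{(\p)}$ embeds as a $\ZZ$-graded $R_{(\p)}$-submodule of~$N_{(\p)}$, and from the defining property of the socle one obtains
\[
\Soc_{R_{(\p)}}(K_{(\p)}) \;=\; K_{(\p)}\cap\Soc_{R_{(\p)}}(N_{(\p)}).
\]
Because $K_{(\p)}\subseteq\ker(h_{(\p)})$ and the restriction of $h_{(\p)}$ to $\Soc_{R_{(\p)}}(N_{(\p)})$ is injective by hypothesis, this intersection must be zero. Hence $\Soc_{R_{(\p)}}(K_{(\p)})=0$ for every $\p\in\Ass_R(N)$, which (again by the graded characterization of associated primes) forces $\Ass_R(K)=\varnothing$ and therefore $K=0$.

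The main obstacle, such as it is, is purely bookkeeping: verifying that the three ingredients used here in the graded setting — exactness of homogeneous localization on homogeneous maps, stability of $\Ass_R(-)$ under passage to submodules, and the identity $\Soc_{R_{(\p)}}(K_{(\p)})=K_{(\p)}\cap\Soc_{R_{(\p)}}(N_{(\p)})$ — all hold verbatim. The first is standard; the second holds because an associated prime of $K$ is the annihilator of a homogeneous element of $K\subseteq N$; and the third is a direct unwinding of definitions since the socle is cut out by annihilation by $\p_{(\p)}$, a condition that passes through the inclusion $K_{(\p)}\subseteq N_{(\p)}$. Once these are noted, the argument runs exactly parallel to Lemma~\ref{lemma:reduction}, which is presumably why the authors elect to omit the proof.
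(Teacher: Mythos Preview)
Your proof is correct and is exactly the graded translation of the proof of Lemma~\ref{lemma:reduction} that the paper has in mind; indeed, the authors omit the proof entirely, saying only that it is similar to that of Lemma~\ref{lemma:reduction}. Your write-up supplies precisely the details they leave implicit.
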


	\begin{proof}[Proof of Theorem~\ref{theorem:inj-graded}]
	List the members of the set $Q:=\{\p_0 : \p \in \Ass_R(N)\}$ so that no member contains any of its predecessors.  Let $\q\in Q$, and suppose inductively that there exists $f\in F_i$ such that, for every $\p\in\Ass_R(N)$ with $\p_0$ a predecessor of $\q$, the map $f_{(\p)}$ becomes injective when restricted to $\Soc_{R_{(\p)}}(N_{(\p)})$.  Let $J$ denote the intersection of the predecessors of~$\q$, and let $P$ denote the fiber of $\q$ in $\Ass_R(N)$.  For every $\p\in P$, let $g(\p)$ be a map in $F_i$ that becomes injective after homogeneously localizing at~$\p$.  Note that $|(J+\q)/\q|>\sum_{\p\in P} r(\p)$.  Hence, by Lemma~\ref{lemma:multi-rank}, there exists a function $c:P\rightarrow J$ such that, for every $\p\in P$, the map $f + \sum_{\p'\in P} c(\p')g(\p')$ becomes injective after homogeneously localizing at $\p$ and restricting to $\Soc_{R_{(\p)}}(N_{(\p)})$.  Induction, followed by an application of Lemma~\ref{lemma:reduction-graded}, brings the proof to a close.
	\end{proof}

	\section*{Acknowledgements}\label{sec:acknowledgements}

The first author would like to thank Ela and Olgur Celikbas, Guantao Chen, Florian Enescu, Neil Epstein, Karl Schwede, Alexandra Smirnova, and Janet Vassilev for providing opportunities to speak about the results of this paper.  The first author would also like to thank Craig Huneke for offering helpful feedback on the content of the paper.  This research did not receive any specific grant from funding agencies in the public, commercial, or not-for-profit sectors.

\bibliographystyle{amsplain}
\bibliography{baidya-references}

\end{document}